\documentclass[12pt, twoside, leqno]{article}



\usepackage{amsmath,amsthm}
\usepackage{amssymb}

\usepackage{enumerate}

\usepackage{graphicx}



\pagestyle{myheadings}
\markboth{T. C. Batista and F. A. Tal and J. S. Gonschorowski}{Ergodic measures supported on periodic orbits}



\newtheorem{thm}{Theorem}[section]
\newtheorem{cor}[thm]{Corollary}
\newtheorem{lem}[thm]{Lemma}
\newtheorem{prop}[thm]{Proposition}



\theoremstyle{definition}



\numberwithin{equation}{section}


\frenchspacing

\textwidth=13.5cm
\textheight=23cm
\parindent=16pt
\oddsidemargin=-0.5cm
\evensidemargin=-0.5cm
\topmargin=-0.5cm



\newcommand{\R}{\mathbb{R}}
\newcommand{\N}{\mathbb{N}}


\begin{document}


\baselineskip=17pt


\title{Density of the set of symbolic dynamics with all ergodic measures supported on periodic orbits}

\author{Tatiane Cardoso Batista\\
Universidade Tecnol\'ogica Federal do Paran\'a\\ 
85053-525 Guarapuava, Brazil\\
E-mail: tatianebatista@utfpr.edu.br
\and 
Juliano dos Santos Gonschorowski\\
Universidade Tecnol\'ogica Federal do Paran\'a\\ 
85053-525 Guarapuava, Brazil\\
E-mail: julianod@utfpr.edu.br
\and 
Fabio Armando Tal\\
Instituto de Matem\'atica e Estat\'istica\\ 
Universidade de S\~ao Paulo\\
05508-090 S\~ao Paulo, Brazil\\
E-mail: fabiotal@ime.usp.br}

\date{January 15, 2015}

\maketitle


\renewcommand{\thefootnote}{}

\footnote{2010 \emph{Mathematics Subject Classification}: Primary XXXX; Secondary YYYY.}

\footnote{\emph{Key words and phrases}: Ergodic Measures, Cantor Set, Periodic Orbits.}

\renewcommand{\thefootnote}{\arabic{footnote}}
\setcounter{footnote}{0}


\begin{abstract}
Let $K$ be the Cantor set. We prove that arbitrarily close to a homeomorphism $T:K\rightarrow K$ there exists a homeomorphism $\widetilde T:K\rightarrow K$ such that the $\omega$-limit of every orbit is a periodic orbit.
We also prove that arbitrarily close to an endomorphism $T:K\rightarrow K$  there exists an endomorphism $\widetilde T:K\rightarrow K$ close to $T$ such that every orbit is finally periodic.
\end{abstract}

\section{Introduction}
Given $X$ a topologic space, a dynamic $T:X\rightarrow X$ and an observable $f:X\rightarrow\R$, both continuous functions, the fundamental question in ergodic optimization is to determine, among the set $M_T(X)$ of all  $T$-invariant Borel probability measures, which measures maximize the functional $F_f:M_T(X)\rightarrow\R$ definided by:
$$\displaystyle F_f(\mu)=\int_Xf\ d\mu.$$
A measure maximizing this functional is usually called a $f$ maximizing measure.

This relatively new field of study has seen a fast development in the last decade, and several interesting lines of research have been pursued, see for instance \cite{art3} and references therein. Among the different lines of research in this subject, one that has received great attention is the search to determine what is the typical support of maximizing measures, where typical is of course context dependent. For instance, one of the most relevant conjectures, which was later proved by Contreras in \cite{gonzalo}, considered the case where there existed a fixed expansive $T$ and asked if, in the space of Lipschitz functions from $X$ to $\R$, the set observables for which there exists a single $f$-maximizing measure, with this measure supported on a periodic orbit, is a $G_{\delta}$.

In the same spirit but in a different context, some works search to determine what is the expected behaviour of maximizing measures when the observable $f$ is fixed, but the dynamics varies in a given space. This has been done for instance when $X$ is a compact Riemanian manifold, where in \cite{arthom,artsf} the typical behaviour when $T$ belongs to the set of homeomorphisms of $X$ is studied, and in \cite{artend, juju} this is done with $T$ varying in the space of all continuous surjections of $X$.
This work was born out of a similar question, to study this behaviour when $X$ is the cantor set $K$, but the investigation led to a result which is somewhat more general, dealing only with the dynamics in $K$ and bypassing discussions on the observable functions.

 The Cantor set $K$ is defined as a totally disconnected, perfect and compact metric space, and a classical result states that any two such sets are homeomorphic. Since all of our results are topological we can consider any realization of the Cantor set, and we will usually work with the set $\Sigma_N$ of  one sided or two sided sequences of $N$-symbols with the usual metric. We consider the sets $End(K)$ of endomorphisms of $K$, that is, all continuous surjections of $K$, with the metric $D\left(T,\widetilde T\right)=\displaystyle\max_{x\in K} d\left(T(x),\widetilde T(x)\right)$ and also the subset $Hom(K)$ of all homeomorphisms of $K$, endowed with the induced metric. Our main results are: 

\begin{thm}\label{res1}
Given $T\in End(K)$ and $\varepsilon>0$, there exists $\widetilde T\in End(K)$  such that $D\left(T,\widetilde T\right)<\varepsilon$ and every orbit of $ \widetilde T$ is finally
periodic\footnote{\normalsize{We say that the orbit of a point $ x \in K $ is
finally periodic by $T$ if there exist $j,N>0$ such that
$T^{N+j}(x)=T^j(x)$}}.
\end{thm}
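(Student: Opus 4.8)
The plan is to exploit the fact that $K$ can be realized as $\Sigma_N$, the space of one-sided sequences on $N$ symbols, and that its topology has a basis of clopen cylinder sets whose diameters shrink to zero. The key structural tool is that for any $\delta>0$ there is a finite clopen partition $\mathcal{P} = \{C_1,\dots,C_m\}$ of $K$ into sets of diameter less than $\delta$. I would choose $\delta$ small enough (relative to a modulus of uniform continuity of $T$) that moving each point within its partition piece costs less than $\varepsilon$ in the metric $D$.

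First I would refine the partition so that it is \emph{adapted} to $T$: by uniform continuity of $T$ on the compact set $K$, I can take a clopen partition $\mathcal{P}$ fine enough that $T$ maps each piece $C_i$ into a small neighborhood of a single point, and simultaneously fine enough that each $T(C_i)$ is contained in some union of pieces. The construction is then to select one representative point $p_i \in C_i$ in each piece and define the perturbation $\widetilde{T}$ to be constant on each $C_i$, equal to the representative of whichever piece contains $T(p_i)$. Concretely, let $\sigma:\{1,\dots,m\}\to\{1,\dots,m\}$ be the map sending $i$ to the index $j$ with $T(p_i)\in C_j$, and set $\widetilde{T}(x) = p_{\sigma(i)}$ for all $x \in C_i$. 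Since $\widetilde{T}$ takes only finitely many values, the induced dynamics on the finite set $\{p_1,\dots,p_m\}$ is governed entirely by the combinatorial map $\sigma$, and every orbit of a map on a finite set is eventually periodic; hence every orbit of $\widetilde{T}$ is finally periodic.

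The estimate $D(T,\widetilde{T})<\varepsilon$ follows because for $x \in C_i$ we have $T(x)$ close to $T(p_i)$ (both lie in the small image of $C_i$) and $T(p_i)$ lies in the same small piece $C_{\sigma(i)}$ as $\widetilde{T}(x)=p_{\sigma(i)}$, so $d(T(x),\widetilde{T}(x))$ is bounded by a sum of two small terms that I can force below $\varepsilon$ by taking $\delta$ small. The main obstacle, however, is that this crude map $\widetilde{T}$ need not be surjective, so it lies in $\mathrm{End}(K)$ only if I repair continuity and the self-map structure — but more seriously, a map that is constant on clopen pieces is \emph{not} a continuous surjection of $K$ onto $K$ in the required sense, and indeed its image is a finite set rather than all of $K$. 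The real work is therefore to promote the finite combinatorial skeleton to an honest element of $\mathrm{End}(K)$.

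To overcome this, rather than collapsing each $C_i$ to a point, I would instead build $\widetilde{T}$ to map each clopen piece $C_i$ \emph{homeomorphically} onto the target piece $C_{\sigma(i)}$, choosing these homeomorphisms compatibly so that the global map is continuous and surjective onto $K$. The finite directed graph of $\sigma$ decomposes into trees feeding into cycles; on the cyclic part one composes the homeomorphisms around each cycle and uses a fixed-point or conjugacy argument to locate, inside the periodic clopen sets, a genuine periodic orbit in $K$ toward which all orbits in the cylinder converge, while the tree (transient) part is mapped forward into the cycles. Verifying that every orbit is finally periodic then amounts to tracking a point's itinerary through the pieces, which is eventually periodic by finiteness of $\mathcal{P}$, and showing the residual motion inside the recurrent pieces collapses onto a periodic orbit; controlling this residual motion while keeping surjectivity and the $\varepsilon$-bound simultaneously is the crux of the argument.
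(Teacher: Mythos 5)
Your high-level strategy --- cut $K$ into small clopen (Cantor) pieces, force the perturbation to land in the same piece as $T$, and reduce the dynamics to a finite combinatorial map $\sigma$ --- is exactly the skeleton of the paper (its Proposition \ref{eps} plus Theorem \ref{endo}). But the mechanism you propose for promoting the skeleton to an element of $End(K)$ has a genuine flaw. If $\widetilde T$ maps each piece $C_i$ homeomorphically (or even just continuously and onto) to the single piece $C_{\sigma(i)}$, then the image of $\widetilde T$ is $\bigcup_i C_{\sigma(i)}$, which equals $K$ only if $\sigma$ is surjective, hence (on a finite set) bijective. For a general endomorphism $T$, two pieces can map into the same piece, so $\sigma$ is not injective and your map misses every piece outside the range of $\sigma$; no choice of ``compatible'' homeomorphisms can repair this, because the defect is in the scheme itself, not in the choices. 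The fix is to abandon the one-representative-per-piece bookkeeping and use the finer decomposition $C_i\cap T^{-1}(C_j)$: since $T$ is surjective, every $C_j$ has nonempty preimage, and one can demand only that $\widetilde T(x)\in C_j \Longleftrightarrow T(x)\in C_j$ (the paper's formulation), then arrange, inside each returning piece, that some small sub-cylinder maps \emph{onto} the whole piece. In the paper's Lemma this is done with the shift map $(i,R_i,x_{l_i+1},\dots)\mapsto(i,x_{l_i+1},\dots)$, which restores surjectivity precisely because a tiny sub-cylinder covers the entire piece.

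The second gap is that your cycle analysis proves the wrong conclusion. Theorem \ref{res1} requires \emph{exact} final periodicity, $\widetilde T^{N+j}(x)=\widetilde T^{j}(x)$, not convergence of orbits to a periodic orbit (that weaker statement is the content of Theorem \ref{res2}, for homeomorphisms). With your piece-to-piece homeomorphisms, the return map around a cycle is a homeomorphism of a Cantor set; such a map may have no periodic points at all (an odometer, say), and worse, injectivity forces ``finally periodic'' to mean ``periodic,'' so you would need every cycle composition to be of finite order --- something you could arrange by choosing the homeomorphisms so each cycle composes to the identity, but which your ``locate a periodic orbit toward which all orbits converge'' step does not deliver. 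The paper sidesteps this entirely by making $\widetilde T$ aggressively non-injective: inside each returning piece, everything outside the distinguished sub-cylinder is collapsed to a single fixed point, and the only way an orbit can avoid being collapsed in finite time is to lie in a nested intersection of sub-cylinders, which is the fixed point itself. Non-injectivity is what makes the endomorphism statement easy; your insistence on homeomorphisms between pieces imports the difficulties of the homeomorphism case (which the paper handles separately, in Lemma \ref{lem}, with a different and more delicate construction) into a setting where they are unnecessary.
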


\begin{thm}\label{res2}
Given $T\in Hom(K)$ and $\varepsilon>0$, there exists $\widetilde T\in Hom(K)$ such that $D\left(T,\widetilde T\right)<\varepsilon$ and the $\omega$-limit of every orbit of $\widetilde T$ is a periodic orbit.
\end{thm}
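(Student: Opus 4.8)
The plan is to reduce the statement to a finite combinatorial problem by means of a sufficiently fine clopen partition, and then to build $\widetilde T$ by gluing explicit ``north--south'' homeomorphisms over the resulting combinatorial skeleton. First I would use the uniform continuity of $T$ and of $T^{-1}$ to fix a clopen partition $\mathcal P=\{P_1,\dots,P_N\}$ with $\operatorname{diam}(P_i)<\varepsilon$ and $\operatorname{diam}(T^{\pm1}(P_i))<\varepsilon$ for every $i$. Writing $S_i=\{\,j:\ P_j\cap T(P_i)\neq\emptyset\,\}$, the triangle inequality shows that any homeomorphism $\widetilde T$ satisfying $\widetilde T(P_i)\subseteq\bigcup_{j\in S_i}P_j$ for all $i$ already obeys $D(T,\widetilde T)<C\varepsilon$ for a universal constant $C$; rescaling $\varepsilon$ at the outset, it therefore suffices to produce such a $\widetilde T$ whose orbits all have periodic $\omega$-limit. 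This turns the problem into choosing, among the \emph{allowed transitions} $i\rightsquigarrow j$ (those with $j\in S_i$), a dynamics that is simultaneously realizable by a bijection and eventually periodic.

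Next I would encode the allowed transitions as a directed graph $H$ on $\{1,\dots,N\}$ and select inside it a \emph{gradient-like} skeleton: finitely many pairwise disjoint directed cycles $\Gamma_1,\dots,\Gamma_r$ (the future carriers of the periodic orbits) together with, from every vertex, a directed path into one of these cycles. Because $T$ is invertible, the sets $T(P_i)\cap P_j$ cover each $P_j$ and the bipartite incidence is balanced, so a Hall/matching argument lets me subdivide and re-match the clopen pieces and realize this skeleton by an honest bijection: the Cantor set is split into clopen ``columns'' that are shifted along the chosen edges, every column eventually arriving at one of the cycles $\Gamma_k$.

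The heart of the construction is the local model over a single cycle $\Gamma_k$ of length $p$. Here one cannot simply funnel every point onto a periodic orbit: no homeomorphism of a Cantor set admits a periodic orbit as a \emph{global} attractor, since the $\alpha$-limit set is nonempty, invariant and disjoint from any attracting orbit. This is exactly where the homeomorphism case departs from Theorem \ref{res1}: for endomorphisms the lack of injectivity permits collapsing a whole clopen set directly onto an attracting periodic orbit, making orbits finally periodic, whereas bijectivity forbids it. Instead I would build, on the clopen set carrying $\Gamma_k$, a \emph{north--south} homeomorphism with an attracting periodic orbit $N_k$ and a repelling periodic orbit $S_k$, the intermediate region being organized into clopen levels $A_n$, $n\in\mathbb Z$, with $\widetilde T(A_n)=A_{n+1}$, $A_n\to N_k$ as $n\to+\infty$ and $A_n\to S_k$ as $n\to-\infty$. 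This is an explicit bijection in which every forward orbit has $\omega$-limit equal to the periodic orbit $N_k$, the only other $\omega$-limit being $S_k$ itself, which is permissible precisely because we constrain only $\omega$-limits and $S_k$ is periodic.

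Finally I would glue the local models along the skeleton and verify the two requirements: that $\widetilde T$ is a homeomorphism with $\widetilde T(P_i)\subseteq\bigcup_{j\in S_i}P_j$, hence $\varepsilon$-close to $T$; and that every orbit, after a transient governed by the gradient-like skeleton, enters the clopen cylinder of some cycle $\Gamma_k$ and there converges in $\omega$ to $N_k$ (or already lies on $S_k$), so that all $\omega$-limits are periodic orbits. The main obstacle is the simultaneous fulfilment of three competing demands in the local model and its gluing: bijectivity, the distance constraint $\widetilde T(P_i)\subseteq\bigcup_{j\in S_i}P_j$, and periodicity of every $\omega$-limit. The topological fact that rules out a genuine global attractor is what forces the north--south design with its permitted repelling periodic orbit, and checking that the matched clopen columns can be routed into the cycles without destroying injectivity or breaking the $\varepsilon$-bound is the delicate core of the argument.
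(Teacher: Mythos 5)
Your reduction to a fine clopen partition and the closeness estimate are fine, but the core of your construction contains two genuine problems. First, the topological claim on which you base the north--south design is false: a homeomorphism of a Cantor set \emph{can} have a periodic orbit as a global attractor. Write $K=\{p\}\cup\bigcup_{n\in\mathbb{Z}}K_n$ with the $K_n$ pairwise disjoint Cantor sets whose diameters tend to $0$ and which accumulate only at $p$ (re-index the decomposition of Proposition \ref{seq} by $\mathbb{Z}$), and let $\widetilde T$ fix $p$ and map each $K_n$ homeomorphically onto $K_{n+1}$; then $\omega(x)=\{p\}$ for \emph{every} $x\in K$. Your $\alpha$-limit argument breaks down because the $\alpha$-limit need not be disjoint from the attracting orbit (here $\alpha(x)=\{p\}$ as well); that disjointness only holds for attractors admitting a forward-invariant neighborhood basis, which the statement never requires. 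Indeed, the paper's Lemma \ref{lem} does exactly what you claim is impossible: it produces homeomorphisms in which every orbit converges to a fixed point.

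Second, and fatally, your gluing is inconsistent. In your local model the cycle cylinder $C_k$ is partitioned into $N_k\cup S_k\cup\bigcup_{n\in\mathbb{Z}}A_n$ with $\widetilde T(A_n)=A_{n+1}$, so $\widetilde T(C_k)=C_k$; injectivity then forbids any point outside $C_k$ from ever entering $C_k$ (if $\widetilde T(B)\subset C_k=\widetilde T(C_k)$, then $B\subset C_k$). Hence no transient column can be ``routed into'' a cycle, contrary to your gradient-like skeleton. And transient vertices cannot be avoided, because the transition graph need not admit a spanning union of disjoint cycles: if $T(P_1)\subsetneq P_3$, $T(P_2)\subsetneq P_3$ and $T(P_3)\supset P_1\cup P_2$, then any sub-digraph realizable by a bijection (every vertex with in-degree and out-degree at least $1$) must contain the edges $1\to3$, $2\to3$, $3\to1$, $3\to2$, whose cycles share the vertex $3$; the combinatorics then permits non-periodic itineraries such as $1,3,2,3,1,\dots$, so periodicity of $\omega$-limits must be enforced by the maps inside the pieces, not by the skeleton alone. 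Repairing this forces the cycle regions to leak backward into the transient columns (levels near $S_k$ mapping partially out of $C_k$) and the incoming columns to be interleaved into the level structure; at that point you have essentially reconstructed the paper's actual argument, namely Lemma \ref{lem}, where the ``repelling'' pieces eject a sequence of shells that transit into the ``attracting'' pieces and converge to fixed points, combined with the induction on $N$ of Theorem \ref{hom}, which short-circuits pieces that are neither forward nor backward invariant.
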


As direct consequence of these theorems we are able to obtain a positive answer for the ergodic optimization problem:

\begin{cor}\label{co:intro}
Let $K$ be a Cantor set. Given an endomorphism (respectively homeomorphism) $T:K\rightarrow K$, a continuous function $f:K\rightarrow \R$ and $\varepsilon>0$, there exists an endomorphism (respectively homeomorphism) $\widetilde T:K\rightarrow K$ with
$$D\left(T,\widetilde T\right)=\displaystyle\max_{x\in K} d\left(T(x),\widetilde T(x)\right)<\varepsilon$$
\noindent and such that $\widetilde T$ has a $f$-maximizing measure supported on a periodic orbit.
\end{cor}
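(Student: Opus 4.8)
The plan is to deduce the corollary directly from Theorems \ref{res1} and \ref{res2}, so that the entire task reduces to a soft ergodic-theoretic argument. First I would fix $T$, $f$ and $\varepsilon$, and apply the appropriate theorem to produce $\widetilde T$ with $D(T,\widetilde T)<\varepsilon$ such that every orbit of $\widetilde T$ is finally periodic (endomorphism case) or the $\omega$-limit of every orbit is a periodic orbit (homeomorphism case). The goal is then to exhibit an $f$-maximizing measure for $\widetilde T$ supported on a periodic orbit.

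The central claim I would establish is that, for such a $\widetilde T$, every ergodic $\widetilde T$-invariant Borel probability measure $\mu$ is the uniform measure on a single periodic orbit. To see this I would invoke the Poincar\'e recurrence theorem, which guarantees that $\mu$-almost every $x$ is recurrent, i.e.\ $x\in\omega(x)$. In both cases $\omega(x)$ is a periodic orbit: in the endomorphism case because $x$ is finally periodic, so its forward orbit eventually enters a finite cycle on which it accumulates exactly. Hence a recurrent $x$ must already lie on that cycle, and is therefore periodic; thus $\mu$-almost every point is periodic.

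To upgrade ``$\mu$-almost every point is periodic'' to ``$\mu$ is supported on one orbit,'' I would use genericity of points under an ergodic measure: for ergodic $\mu$ the empirical measures $\frac{1}{m}\sum_{k=0}^{m-1}\delta_{\widetilde T^k(x)}$ converge weakly to $\mu$ for $\mu$-almost every $x$. When $x$ is periodic of period $p$, however, these averages converge to the orbit measure $\frac{1}{p}\sum_{k=0}^{p-1}\delta_{\widetilde T^k(x)}$. Equating the two limits shows that $\mu$ coincides with the uniform measure on the orbit of ($\mu$-almost any) $x$, which is precisely the claim.

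Finally I would produce the maximizing measure by the standard compactness argument: $M_{\widetilde T}(K)$ is nonempty by Krylov--Bogolyubov, and it is convex and weak-$*$ compact, while $F_f$ is affine and continuous on it. Hence $F_f$ attains its maximum and the set of maximizers is a nonempty compact convex face of $M_{\widetilde T}(K)$. Such a face has an extreme point, which is then an extreme point of $M_{\widetilde T}(K)$ itself and therefore ergodic. By the central claim this ergodic maximizer is supported on a periodic orbit, which completes the proof. I expect the only delicate point to be the passage from ``$\mu$-almost every point is periodic'' to ``$\mu$ is carried by a single orbit''; everything else is a direct application of the main theorems together with classical facts about invariant measures.
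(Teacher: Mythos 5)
Your proposal is correct and follows essentially the same route as the paper: the paper's proof also applies Theorems \ref{res1} and \ref{res2}, invokes the existence of an ergodic $f$-maximizing measure as a ``well known fact,'' and observes that every ergodic $\widetilde T$-invariant measure must then be supported on a periodic orbit. The only difference is that you supply the details (Poincar\'e recurrence plus Birkhoff genericity for the support claim, and the Krylov--Bogolyubov/extreme-point argument for the existence of an ergodic maximizer) which the paper leaves implicit.
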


This short note is designed to be self-contained. There study of typical dynamics in the Cantor set apart from any ergodic optimization result has already some important literature. For instance, the works \cite{nov3, nov4, nov} provide a much stronger result, showing the existence of a conjugacy class in $Hom(K)$ which itself is generic, and describing this conjugacy class. Parts of Theorems \ref{res1} and \ref{res2} could be derived more directly from their work, but their proofs are more involved.

\section{Preliminaries}

We begin by listing some simple and immediate properties of the Cantor set:

\begin{prop}\label{div}
A Cantor set can be partitioned into $N$ disjoint nonempty Cantor sets.
\end{prop}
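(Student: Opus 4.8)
The plan is to reduce the statement to a partition by clopen sets, exploiting the topological characterization of the Cantor set recalled above: $K$ is a totally disconnected, perfect, compact metric space. The first fact I would record is that every nonempty clopen subset $C$ of a Cantor set $K$ is again a Cantor set. Indeed, $C$ is compact (it is closed in a compact space), and it is metric and totally disconnected by inheritance from $K$. It is also perfect: given $x\in C$, every neighborhood of $x$ in $K$ meets $K\setminus\{x\}$ because $K$ has no isolated points, and since $C$ is open a small enough such neighborhood lies inside $C$, so $x$ is not isolated in $C$. Granting this, it suffices to partition $K$ into $N$ pairwise disjoint nonempty clopen subsets.

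Since all Cantor sets are homeomorphic, and a homeomorphism carries a clopen partition to a clopen partition, I would pass to the concrete realization $\Sigma_N=\{0,1,\dots,N-1\}^{\N}$ of one-sided sequences with its usual metric, as sanctioned in the introduction. Partitioning by the first symbol, set $C_i=\{x\in\Sigma_N : x_0=i\}$ for $i=0,\dots,N-1$. Each $C_i$ is a cylinder, hence clopen; the $C_i$ are pairwise disjoint and nonempty, and their union is all of $\Sigma_N$. This exhibits the desired partition into $N$ clopen pieces, each of which is a Cantor set by the first paragraph. (In fact each $C_i$ is homeomorphic to $\Sigma_N$ itself via the shift deleting the first coordinate, which makes perfectness of the pieces immediate in this model.)

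Transporting this partition back through a homeomorphism $K\cong\Sigma_N$ then yields a partition of the original $K$ into $N$ disjoint nonempty Cantor sets, finishing the argument. There is no genuinely hard step here; the only point that requires a moment's care is the verification that the pieces stay perfect, which is precisely where the \emph{openness} of the pieces is used. This is the reason I insist on a clopen partition rather than a merely closed one, since an arbitrary closed subset of a Cantor set can fail to be perfect.
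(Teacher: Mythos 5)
Your proof is correct. The paper gives no proof of this proposition at all---it is listed among the ``simple and immediate properties'' of the Cantor set---and your argument (nonempty clopen subsets of a Cantor set are Cantor sets, plus the first-symbol cylinder partition of $\Sigma_N$ transported back by a homeomorphism) is exactly the standard argument being taken for granted there, with the only caveat being the trivial case $N=1$, where $\Sigma_1$ is a point and one simply takes the partition $\{K\}$ itself.
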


\begin{prop}\label{eps}
Given $\varepsilon>0$, there exists $M>0$ and disjoint subsets $K_j\subset K$ such that
\begin{enumerate}[\upshape (i)]
\item $K_j$ is a Cantor set, with $0\leq j\leq M$;
\item $K\displaystyle=\bigcup_{j=1}^{M}K_j$;
\item For each $j=1,2,3,...,M$, diam$\left(K_j\right)<\varepsilon$.
\end{enumerate}
\end{prop}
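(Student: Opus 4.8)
The plan is to exploit the two defining features of the Cantor set that the previous proposition does not directly control, namely total disconnectedness and perfectness, in order to produce a partition by clopen sets of small diameter. First I would invoke the standard topological fact that a compact, totally disconnected metric space has a basis of clopen (simultaneously closed and open) sets; concretely, for each $x\in K$ and each $\delta>0$ there is a clopen set $U$ with $x\in U$ and $\operatorname{diam}(U)<\delta$. If one prefers to stay inside a concrete model, working in $\Sigma_N$ makes this transparent: the cylinder sets are clopen and their diameters tend to $0$ as their length grows, so the desired small clopen neighborhoods can be taken to be cylinders directly, bypassing the abstract basis theorem.

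Granting this, I would fix $\varepsilon>0$ and, for every $x\in K$, choose a clopen set $U_x\ni x$ with $\operatorname{diam}(U_x)<\varepsilon$. The family $\{U_x\}_{x\in K}$ is an open cover of the compact set $K$, so it admits a finite subcover $U_{x_1},\dots,U_{x_n}$. To convert this cover into a partition I would disjointify in the usual way, setting $V_1=U_{x_1}$ and $V_i=U_{x_i}\setminus(U_{x_1}\cup\cdots\cup U_{x_{i-1}})$ for $2\le i\le n$. Each $V_i$ is a finite boolean combination of clopen sets and is therefore itself clopen; the sets $V_i$ are pairwise disjoint with union $K$; and since $V_i\subseteq U_{x_i}$ we have $\operatorname{diam}(V_i)<\varepsilon$. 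Discarding the empty sets and relabelling the nonempty ones as $K_1,\dots,K_M$ then yields properties (ii) and (iii).

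It remains to verify property (i), that each nonempty $K_j$ is itself a Cantor set, i.e. totally disconnected, perfect, and compact. Compactness is immediate, since $K_j$ is a closed subset of the compact space $K$, and total disconnectedness is inherited by every subspace. The one point that deserves genuine care — and the only real obstacle in the argument — is perfectness: I must check that $K_j$ has no isolated points, and this is precisely where the openness of $K_j$ enters. Given $x\in K_j$, perfectness of $K$ guarantees that $x$ is a limit of points of $K$ other than $x$; since $K_j$ is open in $K$, every sufficiently small neighborhood of $x$ is contained in $K_j$ and hence already contains such nearby points. Thus $x$ is not isolated in $K_j$, so $K_j$ is perfect and therefore a Cantor set, completing the proof.
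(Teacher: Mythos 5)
Your proof is correct, and it is precisely the standard argument implicit in the paper, which states this proposition without proof as an immediate property of the Cantor set: cover $K$ by small clopen sets, pass to a finite subcover by compactness, disjointify by boolean operations, and observe that each nonempty clopen piece is itself compact, totally disconnected, and (because it is open) perfect, hence a Cantor set. Your identification of perfectness of the pieces as the one step requiring the openness of the $K_j$ is exactly the right point to flag.
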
 

\begin{prop}\label{seq}
There exists a disjoint sequence $(K_m)_{m\in\N}, \, K_m\subset K$ and a point $p\in K\setminus \bigcup_{m\in\N}K_m$ such that
\begin{enumerate}[\upshape (i)]
\item $K_m$ is a Cantor set, for any $m\in\N$;
\item $K\displaystyle=\left(\bigcup_{m=1}^{\infty}K_m\right)\cup\{p\}$;
\item Given $\varepsilon>0$, there exists $m_0\in\N$ such that, for $m>m_0$, diam$\left(K_m\right)<\varepsilon$.
\end{enumerate}
\end{prop}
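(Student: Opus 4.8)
The plan is to fix a point $p\in K$ and to exhaust $K\setminus\{p\}$ by a sequence of clopen ``shells'' surrounding $p$ whose diameters shrink to zero. The engine is a nested sequence of Cantor sets $K=C_0\supset C_1\supset C_2\supset\cdots$, all containing $p$, built so that $\operatorname{diam}(C_n)<1/n$.

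I would construct the $C_n$ inductively. Given $C_{n-1}$, a Cantor set containing $p$, apply Proposition~\ref{eps} to $C_{n-1}$ with $\varepsilon=1/n$, writing it as a finite disjoint union of Cantor sets each of diameter $<1/n$. Let $D$ be the (unique) piece containing $p$, and set aside the remaining pieces. To guarantee that something nonempty is always peeled off, split $D$ into two Cantor sets by Proposition~\ref{div} (with $N=2$); let $C_n$ be the half containing $p$ and set aside the other half. In this way, at every stage $n$ we record at least one nonempty Cantor set, and every recorded piece at stage $n$ has diameter $<1/n$.

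Collecting all the set-aside pieces across all stages into one countable family and relabelling them $K_1,K_2,\dots$, I claim they satisfy (i)--(iii). Each $K_m$ is a nonempty clopen subset of $K$, hence compact, totally disconnected, and (being open in a perfect space) without isolated points, so it is a Cantor set; this gives (i). For (iii), note that any recorded piece of diameter $\ge\varepsilon$ must have been produced at some stage $n<1/\varepsilon$, and only finitely many pieces are produced up to any finite stage, so all but finitely many $K_m$ have diameter $<\varepsilon$.

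Finally, for (ii), the control $\operatorname{diam}(C_n)<1/n$ together with $p\in C_n$ forces $\bigcap_n C_n=\{p\}$. Hence every $x\neq p$ leaves the nested sequence at some first stage, and at that stage it lies in exactly one set-aside piece; conversely no set-aside piece contains $p$. Thus the $K_m$ partition $K\setminus\{p\}$, which is (ii). The only place needing genuine care is the bookkeeping: one must check that the recorded pieces are pairwise disjoint and miss no point of $K\setminus\{p\}$, and that forcing a split via Proposition~\ref{div} at each stage really does yield infinitely many nonempty shells --- the latter also follows from the fact that $K\setminus\{p\}$ is non-compact (since $p$ is not isolated) and so cannot be a finite union of Cantor sets.
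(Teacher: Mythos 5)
Your proof is correct, but note that the paper itself offers nothing to compare it against: Proposition \ref{seq} is listed among the ``simple and immediate properties of the Cantor set'' in the Preliminaries and is stated without proof. The proof the authors presumably have in mind uses the symbolic realization they fix anyway: in $\Sigma_N^{+}$ take $p=(1,1,1,\dots)$ and $K_m=\{x:\ x_i=1 \text{ for } i<m,\ x_m\neq 1\}$; each $K_m$ is a nonempty clopen, perfect set of diameter at most $2^{-m}$, and these sets visibly partition $\Sigma_N^{+}\setminus\{p\}$. Your argument is a model-free version of the same ``shrinking shells'' idea, built instead from Propositions \ref{div} and \ref{eps} applied to the nested sets $C_n$; it works for any metric realization of $K$ without invoking the homeomorphism to $\Sigma_N^{+}$, at the cost of the bookkeeping you describe. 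That bookkeeping does check out: the stage-$n$ discarded pieces are exactly $C_{n-1}\setminus C_n$, these differences are pairwise disjoint and their union over $n$ is $C_0\setminus\bigcap_n C_n=K\setminus\{p\}$, and the diameter bound $1/n$ at stage $n$ gives (iii) since only finitely many pieces arise per stage. One small simplification: you do not need the clopen-subset argument for (i), since every piece you set aside is handed to you already as a Cantor set by Propositions \ref{eps} and \ref{div}; likewise the compactness remark at the end is redundant, as your forced split of $D$ already guarantees infinitely many shells.
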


\section{Endomorphisms}

Let $\Sigma_N^{+}$ be the space of the sequences $(x_n)_{n\in\N}$ with $x_n\in \{1,2,...,N\}$, and consider the usual metric:
$$d(x,y)=2^{-\min \{i:x_i\neq y_i\}} \ \textnormal{for} \ x,y\in\Sigma_N^{+}.$$

Define $$W_i=\{x: \ x=(i,x_2,x_3,...)\in\Sigma_N^{+}\}.$$

\begin{lem}
Let $T\in End(\Sigma_N^{+})$ be such that for all $1\leq i \leq N$, exists $x^i\in W_i$ with $T(x^i)\in W_i$.
Then there exists $\widetilde T\in End(\Sigma_N^{+})$ such that:
$$\widetilde T(x)\in W_i \Longleftrightarrow T(x)\in W_i,$$
\noindent and every orbit of $\widetilde T$ is finally periodic. 
\end{lem}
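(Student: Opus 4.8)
The plan is to first replace $T$ by a map that keeps the first symbol prescribed by $T$ but freezes all remaining coordinates, and then to repair the resulting failure of surjectivity. Throughout, write $\sigma(x)$ for the first symbol of $T(x)$ and set $A_i=T^{-1}(W_i)=\{x:\sigma(x)=i\}$; since $T$ is continuous each $A_i$ is a finite union of cylinders, hence clopen, and the $A_i$ partition $\Sigma_N^+$. The constraint $\widetilde T(x)\in W_i\Leftrightarrow T(x)\in W_i$ is then exactly the requirement that $\widetilde T(A_i)\subseteq W_i$ for every $i$, equivalently $\widetilde T^{-1}(W_i)=A_i$. The hypothesis furnishes, for each $i$, a point $x^i\in W_i$ with $T(x^i)\in W_i$, that is a point $p_i:=x^i\in A_i\cap W_i$; these will be the fixed points into which orbits eventually collapse.

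First I would define the base map $\widetilde T_0(x)=(\sigma(x),x_2,x_3,\dots)$, replacing the first symbol of $x$ by $\sigma(x)$ and leaving the tail untouched. Because $\sigma$ is locally constant this map is continuous, it satisfies the constraint by construction, and each $p_i$ is a fixed point (since $(p_i)_1=i=\sigma(p_i)$). The decisive feature is that $\widetilde T_0$ leaves the tail $(x_2,x_3,\dots)$ invariant, so the whole forward orbit of $x$ lives in the finite fiber $\{(j,x_2,x_3,\dots):1\le j\le N\}$ and the first symbols evolve by iterating the self-map $j\mapsto\sigma((j,x_2,x_3,\dots))$ of $\{1,\dots,N\}$; iterating a map of an $N$-point set is eventually periodic, so every orbit of $\widetilde T_0$ is finally periodic, with pre-period and period at most $N$. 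Thus $\widetilde T_0$ already meets every requirement except possibly surjectivity: its image is $\bigsqcup_i\{i\}\times\pi(A_i)$, where $\pi$ drops the first coordinate, and since each $A_i$ is a finite union of cylinders each $\pi(A_i)$ is clopen, so the missing set is the clopen set $\bigsqcup_i\{i\}\times B_i$ with $B_i=\Sigma_N^+\setminus\pi(A_i)$.

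The remaining and main step is to restore surjectivity without destroying final periodicity. If every $B_i$ is empty we are done; otherwise, for each $i$ with $B_i\neq\emptyset$ I would carve out of $A_i\cap W_i$ a shrinking sequence of clopen sets accumulating at $p_i$, as provided by Proposition~\ref{seq}, and redefine $\widetilde T$ on this region so that it surjects onto the missing column $\{i\}\times B_i$ together with any old image that is lost, while strictly decreasing a generation counter, mapping each successive annulus one step closer to $p_i$. Leaving $\widetilde T_0$ in force elsewhere, the image becomes all of $W_i$, so $\widetilde T$ is onto; the constraint is preserved because the modification stays inside $W_i$; and any orbit entering the modified region descends to $p_i$ in finitely many steps and halts. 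Every other orbit is governed by $\widetilde T_0$ and is finally periodic by the fiber argument above.

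The hard part is precisely this last construction: because a frozen-tail map can never be onto, any surjective correction must alter infinitely many tails, and one must guarantee that the corrected orbits do not wander forever. The mechanism I would rely on is the absorbing, generation-decreasing structure around the finitely many fixed points $p_i$: the idea is to distribute the missing Cantor set $\{i\}\times B_i$ across the successive annuli so that each annulus can still surject onto its assigned targets (possible since every nonempty clopen subset of $K$ is itself a Cantor set and admits a continuous surjection onto any Cantor set) while every annulus is mapped strictly inward. Verifying that these two demands can be met simultaneously, and that the generation counter is a genuine monovariant that is bounded on each individual orbit, is where the real care lies; once that is in place, finiteness of the alphabet handles the unmodified dynamics and the proof closes.
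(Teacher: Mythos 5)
Your first step is correct and cleanly argued: the frozen-tail map $\widetilde T_0(x)=(\sigma(x),x_2,x_3,\dots)$ is continuous, satisfies $\widetilde T_0^{-1}(W_i)=A_i$, and, since it preserves the tail, every orbit lives in an $N$-point fiber and is finally periodic; the surjectivity defect is indeed the clopen set $\bigcup_i\{i\}\times B_i$. But the entire difficulty of the lemma is the repair of surjectivity, and that step you only sketch --- you say yourself that verifying its two competing demands ``is where the real care lies.'' Moreover, as literally described the scheme fails on a concrete point: continuity at $p_i$. Since $y\in B_i$ means $(j,y)\notin A_i$ for every $j$, the missing column $\{i\}\times B_i$ is disjoint from $A_i$, and being closed it lies at some positive distance $\delta$ from $p_i\in A_i$. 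The annuli $\widetilde K_m$ provided by Proposition \ref{seq} accumulate at $p_i$, so if you ``distribute'' $\{i\}\times B_i$ across the successive annuli, forcing infinitely many of them to surject onto nonempty pieces of it, then there are points arbitrarily close to $p_i$ whose images remain at distance at least $\delta$ from $\widetilde T(p_i)=p_i$, contradicting continuity. To salvage your plan you must concentrate the target $\{i\}\times B_i$ on finitely many (say only the outermost) annuli, let each deeper annulus $\widetilde K_m$ surject onto $\widetilde K_{m-1}$ alone, and keep $p_i$ fixed; note also that $\widetilde T_0$ is the identity on $A_i\cap W_i$, so the annulus-to-annulus surjections automatically recover the images destroyed by the surgery, which disposes of your ``lost image'' bookkeeping. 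None of this verification is in your text, so the proof is incomplete exactly where it matters.

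For comparison, the paper avoids the whole annulus apparatus by exploiting the self-similarity of the shift. Continuity of $T$ at $x^i$ yields a finite word $R_i$ such that the cylinder $W_{i,i}$ of points beginning with $i,R_i$ is contained in $A_i\cap W_i$, and the block-deletion map $(i,R_i,x_{l_i+1},\dots)\mapsto(i,x_{l_i+1},\dots)$ is a homeomorphism from $W_{i,i}$ onto all of $W_i$: surjectivity is restored by this single clopen piece. All other points of $A_i$ are sent to the point $(i,R_i,R_i,\dots)$, which is fixed, and a two-line check shows every orbit reaches a fixed point in finitely many steps. That route is shorter and sidesteps the delicate continuity issue entirely.
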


\begin{proof}
By the continuity of $T$, for all $i=1,...,N$, there exists $l_i$ such that, if  $R_i=x^i_2, x^i_3,...,x^i_{l_i}$, then for all $x$ of the form$(i,R_i,x_{l_i+1},x_{l_i+2},...),\,   T(x)\in W_i.$

For each $i=1,...,N$, define $W_{i,i}=\{x: \ x=(i,R_i,x_{l_i+1},x_{l_i+2},...)\in \Sigma_N^{+}\}.$

Let $\widetilde{T}(x)=\left\{\begin{array}{l}(i,R_i,R_i,R_i,...) \ \textnormal{if} \ T(x)\in W_i \  \textnormal{but} \ x\notin W_{i,i}\\
 (i,x_{l_i+1},...) \ \textnormal{if} \ x\in W_{i,i}
\end{array}
\right.$

For all $x\in K$ if $\widetilde T^{n}(x)\in W_{x_1,x_1}$ for all positive $n$, then $x = (x_1, R_{x_1},  R_{x_1}, ...)$ and $x$ is fixed by $\widetilde T$. Otherwise there exists some $n_0\ge 0$ such that $\widetilde T^{n_0+1}(x)\notin W_{x_1,x_1}$, in which case $\widetilde T^{n_0+1}(x)$ is also fixed by $\widetilde T$.
\end{proof}

Theorem \ref{res1} is a direct consequence of the following theorem and of Proposition \ref{eps}.

\begin{thm}\label{endo}
Given $T:K\rightarrow K$ an endomorphism and $K_1,...,K_N$ disjoint Cantor sets with $K\displaystyle=\bigcup_{i=1}^{N}K_i$, there exists an endomorphism $\widetilde T:K\rightarrow K$ such that:
$$\widetilde T(x)\in K_i \Longleftrightarrow T(x)\in K_i,$$
and every orbit $\widetilde T$ is finally periodic.
\end{thm}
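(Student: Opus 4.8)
The plan is to reduce the statement to the symbolic setting of the preceding lemma and then to upgrade that lemma's construction so as to dispense with its self-loop hypothesis. First I would use Proposition \ref{div} repeatedly to build a homeomorphism $\phi\colon K\to\Sigma_N^{+}$ with $\phi(K_i)=W_i$ for each $i$: partition each $K_i$ into $N$ Cantor sets, partition each of those into $N$ Cantor sets, and so on, matching this nested sequence of partitions with the cylinders of $\Sigma_N^{+}$. Conjugating, it suffices to produce $\widetilde S\in End(\Sigma_N^{+})$ with $\widetilde S(x)\in W_i\Leftrightarrow S(x)\in W_i$, where $S=\phi T\phi^{-1}$, and all orbits finally periodic, and then to set $\widetilde T=\phi^{-1}\widetilde S\phi$. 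This is precisely the conclusion of the lemma, except that the lemma's hypothesis --- that each $W_i$ contains a point whose image lies in $W_i$ --- may fail, and bridging that gap is the whole difficulty.

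To organize the construction I would first record the itinerary data. For each pair $i,j$ let $W_i^{(j)}=\{x\in W_i:S(x)\in W_j\}$; by continuity of $S$ these sets are clopen, they partition $W_i$, and any admissible $\widetilde S$ must map $W_i^{(j)}$ into $W_j$. Form the finite directed graph $G$ on $\{1,\dots,N\}$ with an edge $i\to j$ exactly when $W_i^{(j)}\neq\emptyset$. Every vertex has out-degree at least $1$, since $S$ is defined on each nonempty $W_i$, and, crucially, surjectivity of $S$ forces every vertex to have in-degree at least $1$. In the lemma every vertex carried a self-loop; here I would replace self-loops by the cycles of $G$. The key gadget is a cascading block-shift along a simple cycle $c_0\to c_1\to\cdots\to c_{p-1}\to c_0$: using continuity, pick for each edge $c_m\to c_{m+1}$ a cylinder $V_m\subseteq W_{c_m}$ on which $S$ already sends every point into $W_{c_{m+1}}$, and define $\widetilde S$ on $V_m$ by deleting the word defining $V_m$ and prefixing the single symbol $c_{m+1}$. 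Each $V_m$ is then mapped \emph{onto} $W_{c_{m+1}}$, while the only point of $V_m$ whose whole orbit stays in $\bigcup_m V_m$ is the periodic point whose tail is the periodic concatenation of the deleted words; thus the gadget produces exactly one periodic orbit and is surjective onto the cylinders of the cycle's symbols. As a first design, every point outside the cascades I would send, respecting its itinerary symbol, to a fixed \emph{anchor} $r_j\in W_j$ chosen so that $r_j\mapsto r_{\rho(j)}$ for some selector $\rho(j)$ of an out-neighbour of $j$; each anchor then follows $\rho$ into a cycle and is finally periodic. Since all the pieces involved are clopen, the glued map $\widetilde S$ is continuous.

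The main obstacle is that an endomorphism must be surjective, which forces $\widetilde S$ to spread over every cylinder $W_j$, whereas finally-periodicity wants everything to contract onto a finite set; these pull in opposite directions. The resolution is that spreading is only harmful on regions that are revisited: on the chosen cycles I use the pinched cascade above, which is onto yet carries a single periodic orbit, while any cylinder $W_j$ still uncovered I cover by mapping one clopen piece $W_i^{(j)}$ onto $W_j$ by an arbitrary continuous surjection. This is harmless precisely because the image lies in $W_j$, whence the next application of $\widetilde S$ sends everything outside the cascade regions to the anchor $r_j$, so such orbits collapse after one extra step. Verifying that no non-periodic orbit survives then reduces to a finite combinatorial statement: apart from the chosen disjoint cycles carrying the periodic orbits, the pattern of shift and covering regions must be acyclic, so that every orbit leaves it in boundedly many steps. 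I would arrange this by guiding the choice of cycles and of covering in-edges by the strongly connected components of $G$; the one place the hypotheses are needed is that every \emph{source} component contains a cycle, which holds because every vertex has in-degree at least $1$ --- exactly where surjectivity of $T$ enters. Granting this bookkeeping, every orbit of $\widetilde S$ reaches a periodic orbit, $\widetilde S$ is a continuous surjection preserving the itinerary, and transporting back by $\phi$ yields the required $\widetilde T$.
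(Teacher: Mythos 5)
Your proposal is correct, but it takes a genuinely different route from the paper's. The paper proves Theorem \ref{endo} by induction on $N$: if every $K_i$ contains a point whose image returns to $K_i$, it invokes its symbolic lemma, whose gadget is precisely your cascade in the special case of a self-loop (delete the block $R_i$, prefix $i$), with all remaining points sent to the resulting fixed point $(i,R_i,R_i,\dots)$; otherwise it selects a piece $K_j$ with \emph{no} return points and collapses it, rerouting orbits through $K_j$ via homeomorphisms $h_l$ between pieces of $T^{-1}(K_j)$ and pieces of $K_j$, applying the induction hypothesis to the two-step map $g$ on the remaining $N-1$ pieces, and lifting back; surjectivity of $T$ enters there only to guarantee $T^{-1}(K_j)\neq\emptyset$. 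You avoid induction entirely: you generalize the self-loop gadget to cascades along arbitrary cycles of the transition graph $G$, use surjectivity to get in-degree at least $1$ at every vertex, and restore surjectivity of $\widetilde S$ off the cycles with covering surjections, reducing the theorem to a finite combinatorial choice of vertex-disjoint cycles plus an acyclic set of covering edges. Your route buys a single explicit construction that makes the combinatorics and the role of surjectivity transparent, at the cost of bookkeeping that the paper's induction hides inside the collapsing step. Two remarks to tighten the sketch: first, the combinatorial step is even easier than your SCC discussion suggests --- just choose, for each vertex, one in-edge of $G$ (possible since in-degree is at least $1$); the resulting subgraph is automatically a disjoint union of cycles with out-trees attached, which hands you the chosen cycles and the acyclic covering edges in one stroke. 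Second, your claim that the anchors satisfy $r_j\mapsto r_{\rho(j)}$ can fail if some $r_j$ happens to lie in a cascade cylinder or a covering piece, but this is harmless: any orbit meeting the finite anchor set infinitely often must repeat an anchor and is then finally periodic, while an orbit eventually confined to the cascade and covering regions must, by acyclicity of the covering pattern, eventually follow a single cycle's cascade forever and hence coincide with its unique periodic point.
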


\begin{proof}
The proof is given by induction on $N$. For N=1 it suffices to take $\widetilde T(x)$ to be the identity.
Suppose now that the theorem is valid for $N-1$: If for all $i=1,...,N$, there exists $x\in K_i$ such that $T(x)\in K_i$, then by the previous lemma, the theorem is valid.  Assume then that there exists some $j$, with $1\leq j \leq N$ such that, for all $x\in K_j$ we have that $T(x)\notin K_j$. Let $V_l, 1\le l\le N-1$ be disjoint Cantor sets satisfying $\bigcup_{l=1}^{N-1}V_l= K_j$ and that, for each $l$, there exists $i_l$ such that $V_l\subset K_j\cap T^{-1}(K_{i_l})$. Since we allow for $i_{l_1}$ to be equal to $i_{l_2}$ even if $l_1\not=l_2$, by proposition \ref{div} we can further assume that each $V_l$ is nonempty.

Since $T$ is surjective, the sets $T^{-1}(K_j)$ is nonempty.  Let $U_l, 1\le l\le N-1$ be again disjoint nonempty Cantor sets satisfying $\bigcup_{l=1}^{N-1}U_l= T^{-1}(K_j)$ and such that, for each $l$, there exists $k_l$ such that $U_l\subset K_{k_l}\cap T^{-1}(K_j)$.

Let, for each $1\le l\le N-1,\, h_l$ be a homeomorphism between $U_l$ and $V_l$. Define $\widehat{T}:K\rightarrow K$ by 

\hspace{1.8cm} $\widehat{T}(x)=\left\{\begin{array}{l}
h_l(x) \ \  \textnormal{if} \ x\in {U_l} \\
T(x)  \ \ \textnormal{if} \ x\notin T^{-1}(K_j)\\
\end{array}
\right.$

Let $g:K\setminus K_j\rightarrow K\setminus K_j$ be

\hspace{1.8cm} $g(x)=\left\{\begin{array}{l}
T(x) \ \  \textnormal{if} \ x\notin {U_{l}} \\
\widehat T^2(x) \ \textnormal{if} \ x\in {U_{l}} 
\end{array}
\right.$

By the induction hypothesis, there exists an endomorphism $\widetilde g:K\setminus K_j\rightarrow K\setminus K_j$ such that for $i=1,...,j-1,j+1,...,N$:
$$\widetilde g(x)\in K_i\Longleftrightarrow g(x)\in K_i,$$
\noindent and every orbit is finally periodic.

Finally, define $\widetilde T:K\rightarrow K$ by 

\hspace{1.8cm} $\widetilde T(x)=\left\{\begin{array}{l}
\widetilde g(x) \ \ \ \ \ \ \ \ \ \ \ \ \ \textnormal{if} \ x\notin K_j\cup T^{-1}(K_j)\\
\widehat T(x) \ \  \ \ \ \ \ \ \ \ \ \ \textnormal{if} \ x\in{U_l} \\
\widetilde g\left(h_l^{-1}(x)\right)\ \ \ \ \textnormal{if}\ x\in V_l 
\end{array}
\right.$

Since for all $x\notin K_j, \, \widetilde g(x)$ is equal to either $\widetilde T(x)$ or to $\widetilde T^2(x)$, and all $\widetilde g$ orbits are finally periodic, then all $\widetilde T$ orbits are also finally periodic. 

\end{proof}

\section{Homeomorphisms}

Theorem \ref{res2} is again a direct consequence of Proposition \ref{eps} and the following result:

\begin{thm}\label{hom}
Given $T:K\rightarrow K$ be a homeomorphism and let $K_1,...,K_N$ be disjoint Cantor sets with $K\displaystyle=\bigcup_{i=1}^{N}K_i$, there exists a homeomorphism $\widetilde T:K\rightarrow K$ such that:
$$\widetilde T(x)\in K_i \Longleftrightarrow T(x)\in K_i,$$
\noindent and the $\omega$-limit of every orbit of $\widetilde T$ is a periodic orbit.
\end{thm}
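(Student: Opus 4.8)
The plan is to reformulate the conclusion as a statement about two clopen partitions and then argue by induction on $N$, in close parallel with the proof of Theorem \ref{endo}. First observe that the condition $\widetilde T(x)\in K_i\Longleftrightarrow T(x)\in K_i$ is equivalent to $\widetilde T^{-1}(K_i)=T^{-1}(K_i)$; writing $A_i:=T^{-1}(K_i)$, this says exactly that $\widetilde T$ must carry each clopen Cantor set $A_i$ homeomorphically onto $K_i$. Since $\{A_i\}_{i=1}^N$ and $\{K_i\}_{i=1}^N$ are both partitions of $K$ into clopen Cantor sets, any family of homeomorphisms $A_i\to K_i$ glues to a global homeomorphism of $K$; thus we are free to design $\widetilde T$ cell by cell, subject only to $\widetilde T(A_i)=K_i$. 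Recording the transition data by the cells $C_{j,i}:=K_j\cap A_i$ (an allowed transition $j\to i$ occurring precisely when $C_{j,i}\neq\emptyset$), I will aim for the stronger conclusion that \emph{every} point of $K$ is periodic under $\widetilde T$, which forces each $\omega$-limit to be a single finite orbit. This is the natural target because a homeomorphism of a Cantor set admits no global attractor, so orbits cannot be funneled onto a periodic orbit the way they are collapsed in the endomorphism argument.

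The induction is on $N$, the base case $N=1$ being the identity, and the inductive step splits according to the self-loops of the transition graph, exactly as in the lemma/theorem split of Section 3. Suppose first that some symbol $j$ has no self-loop, i.e.\ $T(K_j)\cap K_j=\emptyset$; then a point spends at most one step in $K_j$, so I pass to the Cantor set $K':=K\setminus K_j$ and the ``skip'' map $g:K'\to K'$ given by $g(x)=T(x)$ for $x\notin A_j$ and $g(x)=T^2(x)$ for $x\in A_j$ (note $A_j\subset K'$ and $T^2(A_j)=T(K_j)\subset K'$). A short computation using that $T$ is a bijection shows $g$ is a homeomorphism of $K'$ respecting the $N-1$ clopen Cantor sets $\{K_i\}_{i\neq j}$, so the inductive hypothesis yields $\widetilde g:K'\to K'$ with the analogous coding property and every point periodic. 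I then reinsert $K_j$: choosing a homeomorphism $\psi:A_j\to K_j$ that sends $A_j\cap T^{-2}(K_i)$ onto $C_{j,i}$ for each $i$ (these two sets are nonempty for exactly the same indices, hence homeomorphic), I set $\widetilde T=\widetilde g$ on $K'\setminus A_j$, $\widetilde T=\psi$ on $A_j$, and $\widetilde T=\widetilde g\circ\psi^{-1}$ on $K_j$. One checks directly that $\widetilde T$ is a homeomorphism, that $\widetilde T(A_i)=K_i$ for every $i$ (the choice of $\psi$ is precisely what makes the coding correct on $K_j$), and that the first return map of $\widetilde T$ to $K'$ is $\widetilde g$; since each $\widetilde g$-periodic orbit lifts to a finite $\widetilde T$-orbit (its points together with their one-step detours through $K_j$), every point stays periodic.

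The remaining case, where every symbol has a self-loop ($C_{i,i}\neq\emptyset$ for all $i$), is the homeomorphism analogue of the lemma of Section 3 and is where the real difficulty lies, since the skip reduction is now unavailable (orbits may remain in any $K_j$). I cannot collapse onto a fixed point as in the endomorphism case, because injectivity forbids any non-fixed point from mapping into a fixed point; instead I plan to realize $\widetilde T$ as a homeomorphism of \emph{finite order on a clopen partition}. Concretely, I would use the self-loop cells $C_{i,i}$ together with Propositions \ref{div} and \ref{seq} to write $K$ as a countable union of clopen Cantor pieces shrinking to a controlled limit set, on each of which $\widetilde T$ cyclically permutes finitely many clopen Cantor subsets (routed along a cycle of the transition graph) and returns via the identity, so that every point, including the limit points furnished by Proposition \ref{seq}, is periodic. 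The main obstacle is to arrange these towers compatibly with the rigid requirement $\widetilde T(A_i)=K_i$ and to keep the accumulation tame enough that $\widetilde T$ remains a homeomorphism with all limit points periodic; everything else, namely the reduction above and the assembly of the global map from clopen pieces, is routine given the freedom established in the first paragraph.
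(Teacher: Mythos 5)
Your first two paragraphs are sound: the reformulation as $\widetilde T^{-1}(K_i)=T^{-1}(K_i)$ with freedom to build $\widetilde T$ cell by cell is exactly the right setting, and the skip-map reduction when some $K_j$ has no self-loop is essentially the paper's own reduction (its second case, which mirrors Theorem \ref{endo}). The fatal problem is the strengthened induction target: it is \emph{impossible}, in general, to make every point of $K$ periodic, and it fails precisely in the case you left open as ``the main obstacle.'' Concretely, take $N=2$, write $K_1=P\sqcup Q$ and $K_2=R\sqcup S$ as unions of clopen Cantor halves, and let $T$ be the homeomorphism with $T(P)=K_1$, $T(Q)=R$, $T(K_2)=S$. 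Both symbols have self-loops, so this lies in your hard case. The coding constraint forces $\widetilde T^{-1}(K_1)=T^{-1}(K_1)=P\subsetneq K_1$: the only points that any admissible $\widetilde T$ maps into $K_1$ already lie in $K_1$. Hence an orbit that leaves $K_1$ can never return, and every $x\in Q$ has $\widetilde T(x)\in K_2$, so no point of $Q$ is periodic for \emph{any} admissible $\widetilde T$. (Symmetrically, if $T(K_i)\subsetneq K_i$, apply the same argument to $\widetilde T^{-1}$.) Since your case-1 lifting argument invokes the strong inductive hypothesis, the obstruction does not stay confined to case 2: the whole induction collapses.

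This obstruction is exactly why the paper's Lemma \ref{lem}, which treats the configuration where each $K_i$ is forward or backward invariant, proves only that every orbit \emph{converges to} a fixed point, and why Theorem \ref{hom} asserts only that $\omega$-limits are periodic orbits. Your supporting intuition --- that injectivity forbids funneling orbits toward a periodic orbit --- is also backwards: what injectivity forbids is orbits eventually \emph{landing} on a periodic orbit, but funneling in the limit is precisely what the paper does. Using Proposition \ref{seq} it decomposes the relevant pieces as $\bigcup_{m}\widehat K_m\cup\{p\}$ with diam$(\widehat K_m)\to 0$ and lets $\widetilde T$ carry $\widehat K_{m-1}$ homeomorphically onto $\widehat K_m$ while fixing $p$; every orbit then converges to the fixed point $p$ although none is eventually periodic. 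To repair your argument you would need to (a) weaken the inductive statement back to ``the $\omega$-limit of every orbit is a periodic orbit,'' under which your case-1 lifting still works (the return map to $K'$ is $\widetilde g$, and the $\omega$-limit of a $\widetilde T$-orbit is the limiting $\widetilde g$-periodic orbit together with its one-step detours through $K_j$), and (b) replace the case-2 sketch by a genuine funneling construction of the paper's type; note also that your dichotomy (no self-loop vs.\ all self-loops) differs from the paper's (every piece forward or backward invariant vs.\ some piece neither), so after weakening the target you would still have to handle pieces that have a self-loop yet are neither forward nor backward invariant, which the paper does by routing the self-loop part through $U_0=K_j\cap T^{-1}(K_j)$.
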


The proof of this theorem is also by induction and, similarly to Theorem \ref{endo}, we analyzed the next particular case (Lemma 4.1) in order to use it in the first case its proof. 

\begin{lem} \label{lem}
Let $T\in Hom(K)$ and $K_1, K_2,\ldots, K_N$ be disjoint Cantor sets with $K\displaystyle=\bigcup_{i=1}^{N}K_i$. Consider that for every $1\leq i\leq N$, at least one of the following properties are satisfied:

\begin{enumerate}[\upshape (i)]
\item For any $y\in K_i$ we have $T^{-1}(y)\in K_i$,
\item For any $x\in K_i$ we have $T(x)\in K_i$.
\end{enumerate}

Then there exists a homeomorphism $\widetilde T:K\rightarrow K$ such that
$$\widetilde T(x)\in K_i \Longleftrightarrow T(x)\in K_i,$$
and every orbit of $\widetilde T$ converges to a fixed point. 
\end{lem}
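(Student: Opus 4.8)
The plan is to exploit the large amount of freedom hidden in the symbolic constraint, reducing the problem to the free choice of finitely many homeomorphisms between clopen Cantor pieces, and then to engineer those pieces so that every orbit is funnelled into a fixed point.

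First I would observe that, since $K=\bigcup_{i=1}^N K_i$ is a \emph{finite} disjoint union of closed sets, each $K_i$ is in fact clopen; hence each $P_i:=T^{-1}(K_i)$ is clopen, nonempty (as $T$ is onto) and a Cantor set, and $\{P_i\}_{i=1}^N$ is a partition of $K$. The requirement $\widetilde T(x)\in K_i\Leftrightarrow T(x)\in K_i$ says exactly that $\widetilde T(P_i)\subseteq K_i$; since $\widetilde T$ must be a bijection and both families partition $K$, this forces $\widetilde T(P_i)=K_i$. Conversely, \emph{any} choice of homeomorphisms $\phi_i\colon P_i\to K_i$ glues, on the finite clopen partition $\{P_i\}$, to a homeomorphism $\widetilde T$ of $K$ meeting the symbolic constraint. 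Thus the whole problem becomes: choose the $\phi_i$ so that every $\widetilde T$-orbit converges to a fixed point.

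Next I would read off the combinatorics forced by hypotheses (i)--(ii). Condition (ii) for $K_i$ means $T(K_i)\subseteq K_i$ and condition (i) means $T^{-1}(K_i)\subseteq K_i$, i.e. $K_i\subseteq T(K_i)$. Calling $K_i$ \emph{invariant} if $T(K_i)=K_i$ (both hold), a \emph{source} if only (i) holds, and a \emph{sink} if only (ii) holds, one checks on the transition graph (an edge $a\to b$ whenever $T(K_a)\cap K_b\neq\emptyset$) that every non-loop edge runs from a source to a sink, since a source has no external input and a sink has no external output. Consequently the block-itinerary of any $\widetilde T$-orbit, which always follows edges of this graph because $\widetilde T(x)\in K_{c}$ with $c$ the block of $T(x)$, must stabilise: it self-loops inside one source or invariant block, performs at most one jump into a sink, and then remains in that sink forever. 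So every orbit is eventually trapped in a single block, and it suffices to force convergence inside each block.

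I would then define the $\phi_i$ block by block. For an invariant $K_i$ (where $P_i=K_i$) I would use a ``North--South'' homeomorphism of $K_i$: split $K_i$ into two Cantor halves, write each half as $\{\ast\}\cup\bigsqcup_{n}C_n$ with $C_n$ shrinking to $\ast$ by Proposition~\ref{seq}, and shift $C_n\to C_{n+1}$ on one half and $C_n\to C_{n-1}$ on the other, so that every orbit converges to one of the two fixed points. For a sink $K_j$, writing $K_j=\{q_j\}\cup\bigsqcup_{n\ge 1}D_n$ with $D_n\to q_j$ (Proposition~\ref{seq}) and $R_j:=P_j\setminus K_j\neq\emptyset$ the clopen part fed in by sources, I would let $\phi_j$ shift $D_n\to D_{n+1}$ towards $q_j$ on $K_j$ and send $R_j$ homeomorphically onto the vacated set $D_1$; then $q_j$ is fixed and every point of $K_j$, as well as every arrival from a source, marches monotonically to $q_j$. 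For a source $K_s$ (where $P_s\subsetneq K_s$ and the exit set $E_s:=K_s\setminus P_s$ is nonempty) I would instead write $K_s=\{q_s\}\cup B_1\cup\bigsqcup_{n\ge 2}B_n$ with $B_1=E_s$ and $B_n\to q_s$, and let $\phi_s$ fix $q_s$ and shift $B_n\to B_{n-1}$ \emph{towards} the exit $B_1$. Finally I would check convergence: an orbit trapped in an invariant or sink block converges to $q_i$ or $q_j$ by the inward shifts, while an orbit trapped in a source is forced to $q_s$ because the source dynamics are designed so that $q_s$ is the only point never reaching $E_s$; every other point of $K_s$ enters $E_s$ in finitely many steps and drops into a sink, where it then converges. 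The main obstacle I anticipate is precisely this source construction --- arranging the decomposition so that the chosen fixed point is the unique non-escaping point while $\phi_s$ remains a homeomorphism onto $K_s$ --- together with the elementary but essential graph argument guaranteeing that each orbit settles in a single block; the North--South sub-lemma for invariant blocks is the other point needing care, though it follows directly from Propositions~\ref{div} and \ref{seq}.
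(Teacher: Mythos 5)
Your proposal is correct and follows essentially the same route as the paper's proof: the same classification of the blocks into sources, sinks and invariant pieces, and the same use of Proposition~\ref{seq} to build shift maps that push every source point out through the exit sets into a sink, where a second shift funnels all orbits into a distinguished fixed point. The only cosmetic differences are your explicit up-front reduction to choosing arbitrary homeomorphisms $\phi_i\colon T^{-1}(K_i)\to K_i$ on the clopen partition, and your North--South map on invariant blocks, where the paper simply takes the identity.
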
 

\begin{proof}
Let $K_{R_l}$ be the Cantor sets that do not satisfy the property i), with $l=1,...,r$ for $1\leq r\leq N$, $K_{A_q}$ be the Cantor sets that do not satisfy property ii), with $q=1,...,n$,  for $1\leq n\leq N$, and let $\overline K= K\setminus \left(\left(\bigcup_{l=1}^{r}K_{R_l}\right)\cup\left(\bigcup_{q=1}^{n}K_{A_q}\right)\right)$. Note that, $N\ge q+r$, and if $K_i\subset \overline K$, then $T(K_i)=K_i$. Furthermore, since $\overline K$ is invariant, $\bigcup_{l=1}^{r} K_{R_l}$ properly contains its image and $\bigcup_{q=1}^n K_{A_q}$ is properly contained in its image, $r$ is null if and only if so is $n$.

1) For each $l, q$, define 
$$W_{A_q}^{R_l}=\left\{x\in K_{R_l}: \ T(x)\in K_{A_q}\right\}\neq\emptyset,$$
$$V_{R_l}=\left\{x\in K_{R_l}: \ T(x)\in K_{R_l}\right\}.$$

Note that $(\bigcup_{q=1}^{n}W_{A_q}^{R_l})\cup V_{R_l} = K_{R_l}$, and that $T(V_{R_l})=K_{R_l}$.  By \ref{seq}, for each $l\le r$, there exists a sequence of Cantor sets $\widetilde K_m^l\subset V_{R_l}$ and a point $p_l\in V_{R_l}$ such that, $\displaystyle V_{R_l}=\bigcup_{m=1}^{\infty} \widetilde K^l_m\bigcup \{p_l\}$ with $diam(\widetilde K^l_m)\rightarrow 0$.
	
Consider, for each $m\geq 2$, the homeomorphism $h^l_m:\widetilde K^l_m\rightarrow \widetilde K^l_{m-1}$ and the homeomorphism $\overline{h}^l:\widetilde K^l_1\rightarrow \bigcup_{q=1}^{n}W_{A_q}^{R_l}$.

Define $\widehat{T_{R_l}}:V_{R_l}\rightarrow K_{R_l}$ by 

\hspace{1.8cm} $\widehat{T_{R_l}}(x)=\left\{\begin{array}{l}
h^l_m(x) \ \ \ \ \ \textnormal{if} \ x\in \widetilde K^l_m, \  m\geq2 \\
\overline{h}^l(x) \ \ \ \ \ \ \textnormal{if} \ x\in \widetilde K^l_1\\
p_l \ \ \ \ \ \ \ \ \ \ \textnormal{if} \ x=p_l
\end{array}
\right.$

Note that, $\widehat{T_{R_l}}^{m+1}(x)\notin V_{R_l}$, for $x\in \widetilde K_m^l$ with $x\neq p_l $.

2) For each $K_{A_q}$, there exists a sequence of Cantor sets $\widehat K^q_m\subset K_{A_q}$ and a point $p_q\in K_{A_q}$ such that, $\displaystyle K_{A_q}=\bigcup_{m=1}^{\infty} \widehat K^q_m\bigcup \{p_q\}$ with $diam(\widehat K^q_m)\rightarrow 0$.

Let, for each $m\geq2$, the homeomorphism $h^q_m:\widehat K^q_{m-1}\rightarrow \widehat K^q_m$.

Define $\widehat{T_{A_q}}:K_{A_q}\rightarrow K_{A_q}$ by 

\hspace{1.8cm} $\widehat{T_{A_q}}(x)=\left\{\begin{array}{l}
h^q_m(x) \ \ \ \ \ \textnormal{if} \ x\in \widehat K_{m-1}^q, \ m\geq2 \\
p_q \ \ \ \ \ \ \ \ \ \ \textnormal{if} \ x=p_q\\
\end{array}
\right.$ 

Note that, for all $x\in K_{A_q}, \widehat T_{A_q}(x)$ converges to fixed point $p_q\in K_{A_q}$.
	
To define the homeomorphism in K we used, for each ${A_q}$ with $q=1,\ldots,n$ and $1\leq n\leq N$, there exists a homeomorphism $h_{A_q}:W_{A_q}^{R_1}\cup W_{A_q}^{R_2}\cup ...\cup W_{A_q}^{R_l}\rightarrow \widetilde K_1^l\subset K_{A_q}$.

Finally, using 1) and 2), we can define $\widetilde{T}:K\rightarrow K$ by 

\hspace{1.8cm} $\widetilde{T}(x)=\left\{\begin{array}{l}
\widehat{T_{R_l}}(x)  \ \ \ \ \ \ \ \ \textnormal{if} \ x\in V_{R_l}, \ l=1,...,r\\
\widehat{T_{A_q}}(x)\ \ \ \ \ \ \ \ \textnormal{if} \ x\in K_{A_q}, \ q=1,...,n\\
h_{A_q}(x) \ \ \ \ \ \ \ \ \textnormal{if} \ x\in K_{R_l}\setminus V_{R_l}, \ l=1,...,r\\
\widehat{T}(x)= x  \ \ \ \ \ \textnormal{if} \ x\in \overline{K}
\end{array}
\right.$
\end{proof}

\begin{proof}{\it of Theorem \ref{hom}}
 
The proof is by induction on $N$. The case where $N=1$ is trivial, as it suffices to take $\widetilde T$ to be the identity. Assume the result is true for $N-1$. There are two possibilities: First, for each $i, \, K_i$ is either forward or backward invariant. Or there exists some $j$ such that $T^{-1}(K_j)\not= K_j$ and $T(K_j)\not=K_j$. In the first possibility, the result follows from Lemma \ref{lem}. So consider now the second possibility.

Let $U_0= K_j\cup T^{-1}(K_j)$ and let $U_l, 1\le l\le N-1$ be again disjoint nonempty Cantor sets satisfying $\bigcup_{l=0}^{N-1}U_l= T^{-1}(K_j)$ and such that, for each $l>0$, there exists $k_l\not=j$ such that $U_l\subset K_{k_l}\cap T^{-1}(K_j)$. Let $V_0=U_0$, and let $V_l, 1\le l\le N-1$ be disjoint Cantor sets satisfying $\bigcup_{l=0}^{N-1}V_l= K_j$ and that, for each $l>0$, there exists $i_l\not=0$ such that $V_l\subset K_j\cap T^{-1}(K_{i_l})$. Again,  by proposition \ref{div} we can further assume that each $U_l, V_l, l>0$ is nonempty and $V_0$ is empty only if so $U_0$. 

Let, for each $0\le l\le N-1,\, h_l$ be a homeomorphism between $U_l$ and $V_l$. Define $\widehat{T}:K\rightarrow K$ by 

\hspace{1.8cm} $\widehat{T}(x)=\left\{\begin{array}{l}
h_l(x) \ \  \textnormal{if} \ x\in {U_l}, \\
T(x)  \ \ \textnormal{if} \ x\notin T^{-1}(K_j),\\
\end{array}
\right.$

The rest of the proof is identical to the proof of Theorem \ref{endo}.
\end{proof}

\begin{proof} {\it of Corollary \ref{co:intro}}
It is a well known fact in ergodic maximization theory that given an endomorphisms $T$ of a compact space and a continuous observable $f$, there is always at least one ergodic $T$ invariant measure which is $f$ maximizing. But if $\widetilde T$ is a transformation such that for every $x$ the orbit by $\widetilde T$ is either finally periodic, or if the $\omega$-limit of $x$ is a periodic orbit, then it follows that any $\widetilde T$ invariant ergodic measures is supported on a periodic orbit.
\end{proof}

\subsection*{Acknowledgements}
The first author was supported by Capes, the second author was supported by CNPq and the third author was partially supported by CNPq and FAPESP

\end{document}